\newcommand{\rl}{\mathbb{R}}
\newcommand{\cx}{\mathbb{C}}
\newcommand{\CE}{\mathcal{E}}
\newcommand{\CH}{\mathcal{H}}
\newcommand{\ai}{\sqrt{-1}}
\newcommand{\ddbar}{\partial \bar{\partial}}
\theoremstyle{plain}
\newtheorem{theorem}{Theorem}[section]
\newtheorem{lemma}[theorem]{Lemma}
\newtheorem{proposition}[theorem]{Proposition}
\newtheorem{corollary}[theorem]{Corollary}
\theoremstyle{definition}
\newtheorem{definition}[theorem]{Definition}
\theoremstyle{definition}
\newtheorem{remark}[theorem]{Remark}
\begin{document}

\title{Relative uniform $K$-stability over models implies existence of extremal metrics}
\author{Yoshinori Hashimoto}
\date{\today}
\address{Department of Mathematics, Osaka Metropolitan University, 3-3-138, Sugimoto, Sumiyoshi-ku, Osaka, 558-8585, Japan.}\email{yhashimoto@omu.ac.jp}

\begin{abstract}
	We prove that an extremal metric on a polarised smooth complex projective variety exists if it is $\mathbb{G}$-uniformly $K$-stable relative to the extremal torus over models, extending a result due to Chi Li \cite{Li20} for constant scalar curvature K\"ahler metrics.
\end{abstract}

\maketitle

\tableofcontents

\section{Introduction}

Let $(X,L)$ be a polarised smooth projective variety over $\mathbb{C}$ of complex dimension $n$. The existence of canonical K\"ahler metrics on $(X,L)$ has been studied intensively in recent decades, particularly in connection to the Yau--Tian--Donaldson conjecture, which states that $(X,L)$ admits a constant scalar curvature K\"ahler (cscK) metric if and only if it is $K$-polystable. Li \cite{Li20} made a significant progress towards this conjecture by proving that the existence of cscK metrics follows from the $\mathbb{G}$-uniform $K$-stability over models. Part of this result (uniform $K$-stability over $\mathcal{E}^{1 , \mathrm{NA}}$ implying cscK metrics) was generalised to arbitrary compact K\"ahler manifolds by Mesquita-Piccione \cite[Theorem A]{MP24}.

A similar conjecture for Calabi's extremal metrics \cite{Cal1} was proposed by Sz\'ekelyhidi \cite{Sze07}, which states that there exists an extremal metric on $(X,L)$ if and only if it is relatively $K$-polystable. It is natural to consider a generalisation of Li's result to the extremal case, which is the aim of this paper. The main result is the following.

\begin{theorem} \label{thexcl}
	Let $\mathbb{G}$ be the complexification of a maximal compact subgroup $\mathbb{K}$ of $\mathrm{Aut}_0 (X,L)$. If $(X,L)$ is $\mathbb{G}$-uniformly $K$-stable relative to the extremal torus over models, then $(X,L)$ admits a $\mathbb{K}$-invariant extremal metric.
\end{theorem}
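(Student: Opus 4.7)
The plan is to adapt Chi Li's framework \cite{Li20} from the cscK to the extremal setting. The central analytic object is the relative (modified) Mabuchi functional $M^\chi$ on the space of $\mathbb{K}$-invariant K\"ahler potentials, where $\chi$ denotes the extremal holomorphic vector field and $\mathbb{T} \subset \mathbb{G}$ the extremal torus it generates; the critical points of $M^\chi$ modulo $\mathbb{T}$ are precisely the $\mathbb{K}$-invariant extremal metrics in $c_1(L)$. By He's extremal analogue of the Chen--Cheng theorem, the existence of such a metric is equivalent to $\mathbb{G}/\mathbb{T}$-coercivity of $M^\chi$ on the finite-energy space $\mathcal{E}^1_\mathbb{K}$. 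The task therefore reduces to extracting this analytic coercivity from the non-Archimedean hypothesis of $\mathbb{G}$-uniform $K$-stability relative to $\mathbb{T}$ over models.

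First I would set up the non-Archimedean counterpart $M^{\chi,\mathrm{NA}}$ of the modified Mabuchi functional on the $\mathbb{K}$-invariant part of $\mathcal{E}^{1,\mathrm{NA}}$, following Boucksom--Jonsson and Li, by subtracting from $M^{\mathrm{NA}}$ the non-Archimedean counterpart $J^{\mathrm{NA}}_\chi$ of $J_\chi$ associated to $\chi$. The stability hypothesis would then be equivalent to a coercivity estimate
\[
M^{\chi,\mathrm{NA}}(\phi^{\mathrm{NA}}) \geq \delta \cdot J^{\mathbb{G},\mathrm{NA}}_\mathbb{T}(\phi^{\mathrm{NA}})
\]
for some $\delta > 0$, with the right-hand side being the $\mathbb{G}$-reduced $J$-functional relative to $\mathbb{T}$. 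The core bridge to the Archimedean side is the slope formula
\[
\lim_{t \to \infty} \frac{M^\chi(\phi_t)}{t} = M^{\chi,\mathrm{NA}}(\phi^{\mathrm{NA}})
\]
along the maximal geodesic ray $(\phi_t)_{t \geq 0}$ compatible with $\phi^{\mathrm{NA}}$; here one must verify that $J_\chi$ behaves correctly under the slope construction, using its representation in terms of the Hamiltonian of $\chi$ and linearity along $\mathcal{C}^{1,\bar 1}$-geodesics.

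Combining the slope formula with the approximation of arbitrary $\mathbb{K}$-invariant finite-energy potentials by maximal geodesic rays arising from models, and running the contradiction argument of \cite{Li20} equivariantly with respect to $\mathbb{G}/\mathbb{T}$, should yield the desired coercivity of $M^\chi$, which via He's theorem produces the extremal metric. The main obstacle lies in controlling the interplay between the extremal correction $J_\chi$ and the $\mathbb{G}$-reduction relative to $\mathbb{T}$ throughout this transfer: one must prevent energy escape along $\mathbb{T}$ in minimising sequences while ensuring uniform lower semicontinuity of $M^{\chi}$ and $J^{\mathbb{G}}_\mathbb{T}$ under the relevant $d_1$-limits, and the use of general models (rather than test configurations) requires this control to be maintained without a finite-dimensional approximation to fall back on, forcing one to work with the full Darvas--Xia style approximation of rays by model-induced rays throughout.
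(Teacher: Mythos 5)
Your overall strategy coincides with the paper's: reduce existence to coercivity of the modified Mabuchi energy via He's extremal version of Chen--Cheng, then run Li's contradiction argument equivariantly, treating the extremal correction term along maximal geodesic rays. However, the step you designate as the ``core bridge'' is, as stated, a genuine gap. The exact slope formula $\lim_{t\to\infty} M^{\chi}(\phi_t)/t = M^{\chi,\mathrm{NA}}(\phi^{\mathrm{NA}})$ along the maximal ray of an \emph{arbitrary} $\phi^{\mathrm{NA}}\in(\mathcal{E}^{1,\mathrm{NA}})^{\mathbb{K}}$ is not available even for the unmodified Mabuchi part: this is precisely the regularisation-type problem that forces the stability hypothesis to be taken \emph{over models} in the first place. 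What Li \cite{Li20} provides, and what the proof actually uses, is only the one-sided inequality $\lim_{s\to\infty} M(\Phi(s))/s \ge \lim_j M^{\mathrm{NA}}(\phi_j)$ for a sequence of model filtrations $\phi_j\in(\mathrm{PSH}^{\mathfrak{M},\mathrm{NA}})^{\mathbb{K}}$ converging strongly to the non-Archimedean metric $\phi$ of the destabilising ray. Your plan to ``verify that $J_\chi$ behaves correctly under the slope construction'' therefore misses where the new work lies: since the stability inequality is applied to the approximants $\phi_j$ rather than to $\phi$ itself, one needs an \emph{exact} convergence statement for the correction term alone, namely that $J_{\mathrm{ext}}^{\mathrm{NA}}$, defined on $(\mathcal{E}^{1,\mathrm{NA}})^{\mathbb{K}}$ as the slope of $J_{\mathrm{ext}}$ along the associated maximal geodesic ray (equal to $J_{\mathrm{ext}}(\Phi(1))$ by affine linearity), is continuous in the strong topology, so that $\lim_j J_{\mathrm{ext}}^{\mathrm{NA}}(\phi_j)=\lim_{s\to\infty} J_{\mathrm{ext}}(\Phi(s))/s$. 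A one-sided semicontinuity statement, which is all you ask for in your closing paragraph, would not suffice here because the correction term has no favourable sign or monotonicity.

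Establishing that continuity is the real technical content, and it does not follow formally from the Hamiltonian representation of $J_{\mathrm{ext}}$: one must (i) show that the maximal geodesic ray attached to a $\mathbb{K}$-invariant non-Archimedean metric is itself $\mathbb{K}$-invariant, since $J_{\mathrm{ext}}$ is only defined (real-valued) on $\mathbb{K}$-invariant potentials --- the paper does this by tracing $\mathbb{K}$-invariance through the Phong--Sturm construction and the upper semicontinuous regularisation; and (ii) show that strong convergence $\phi_j\to\phi$ forces $d_1(\Phi_j(1),\Phi(1))\to 0$ for the maximal rays, which is obtained by dominating the $\phi_j$ by a decreasing net in $\mathcal{H}^{\mathrm{NA}}$ using the envelope property of \cite{BJ18}, then exploiting Darvas' identity $E(\Psi)-E(\Phi)=d_1(\Psi,\Phi)$ for ordered potentials together with convexity of $s\mapsto d_1(\Psi(s),\Phi(s))$, and finally the $d_1$-continuity of $J_{\mathrm{ext}}$ due to He. By contrast, the difficulties you single out at the end --- energy escape along $\mathbb{T}$ and lower semicontinuity of the reduced $J$-functional --- are already handled verbatim by Li's equivariant results and require no extremal-specific input.
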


See Definition \ref{dfguksrtm} for the details of the stability condition used in the theorem above. We prove this theorem by showing (Corollary \ref{cctexjena}) that the correction term for the modified non-Archimedean Mabuchi energy is continuous with respect to the strong topology in $(\mathcal{E}^{1, \mathrm{NA}})^{\mathbb{K}}$, and hence Li's argument applies without any significant change. We also note that the entire proof heavily depends on the non-Archimedean theory of Boucksom--Jonsson \cite{BJ18}.

\begin{remark}
As this paper was nearing completion, the author learned that Boucksom--Jonsson \cite[Theorem A']{BJ25} proved that the existence of $(v,w)$-weighted cscK metrics is equivalent to the $(v,w)$-uniform $K$-stability for models. Thus, while this work was done independently, Theorem \ref{thexcl} can be regarded as forming a small proper subset of their result, proving only one direction of \cite[Theorem A']{BJ25} for extremal metrics.
\end{remark}

Some of our results also overlap with Apostolov--Jubert--Lahdili \cite{AJL23}, Han--Li \cite{HanLi23}, Inoue \cite{Ino}, and Lahdili \cite{Lah19}, as pointed out in Remarks \ref{rmhlegdf} and \ref{rmhleaegdf}.

Concerning the other direction of the conjecture, i.e.~the extremal metric implying relative $K$-polystability (over test configurations), we recall the following well-known results. Mabuchi \cite{Mab14} proved that the existence of extremal metric implies the $K$-polystability of $(X,L)$ relative to the extremal torus, extending an earlier result due to Stoppa--Sz\'ekelyhidi \cite{StoSze} who proved the $K$-stability relative to the maximal torus. In section \ref{scrlkpst}, we remark that this result can be proved by slightly modifying the variational argument of Berman--Darvas--Lu \cite{BDL20}, which is likely well-known to the experts.

\medskip

\noindent \textbf{Organisation of the paper.} After reviewing the preliminaries in section \ref{scprlm}, the key new ingredients for Theorem \ref{thexcl} are proved in section \ref{scmtcrs}. Theorem \ref{thexcl} is proved in section \ref{scpfomnth}, building up on the proof for the cscK case by Li \cite{Li20}. Section \ref{scrlkpst} is a remark on the relative $K$-polystability of the extremal manifold relative to the extremal torus.

\medskip

\noindent \textbf{Acknowledgements} The author thanks Vestislav Apostolov, Thibaut Delcroix, Kento Fujita, Eiji Inoue, Mattias Jonsson, Julien Keller, Yan Li, and Yuji Odaka for helpful discussions, and Pietro Mesquita-Piccione for helpful discussions and pointing out using \cite[Corollary 6.7]{BBJ} in the proof of Lemma \ref{lmappgsg}. This work is partially supported by JSPS KAKENHI Grant Number JP23K03120, and was partly carried out when the author was staying at Montr\'eal as a CRM-Simons scholar during the Thematic Program in Geometric Analysis in 2024; he thanks the organisers, the CRM, and the Simons Foundation for the funding and hospitality.

\section{Preliminaries} \label{scprlm}

\subsection{Calabi's extremal metrics}

We first fix a maximal compact subgroup $\mathbb{K}$ of $\mathrm{Aut}_0 (X,L)$, where $\mathrm{Aut}_0 (X,L)$ is the identity component of the group consisting of automorphisms of $X$ which lift to the total space of $L$. Following the notation in \cite[section 2.1.3]{Li20}, we write
\begin{itemize}
	\item $\mathbb{G} = \mathbb{K}^{\mathbb{C}}$ for the complexification of $\mathbb{K}$,
	\item $\mathbb{T}$ for the identity component of the centre of $\mathbb{G}$.
\end{itemize}

We also fix a reference K\"ahler metric $\omega \in c_1 (L)$, which we assume is $\mathbb{K}$-invariant, and write $\CH$ for the space of K\"ahler potentials with respect to $\omega$, i.e.
\begin{equation*}
	\CH := \{ \varphi \in C^{\infty}\ (X , \rl) \mid \omega_{\varphi}:= \omega + \ai \ddbar \varphi >0 \}.
\end{equation*}
We write $\CE^1$ for the finite energy space, which is the completion of $\CH$ with respect to the distance $d_1$; see Darvas' monograph \cite{darmonograph} for more details. We write $(\mathcal{H})^{\mathbb{K}}$ for the $\mathbb{K}$-invariant K\"ahler potentials, and similarly for $(\mathcal{E}^1)^{\mathbb{K}}$.

Given a vector field $v$ induced by the $\mathbb{G}$-action and a K\"ahler metric $\omega_{\varphi}$ with $\varphi \in (\mathcal{H})^{\mathbb{K}}$, we define a smooth function $\theta (\varphi)$ (which is in general $\mathbb{C}$-valued) satisfying
\begin{equation*}
	\iota (v) \omega_{\varphi} = \ai \bar{\partial} \theta (\varphi),
\end{equation*}
which is well-defined up to an additive constant, and call it a \textbf{holomorphy potential} of $v$ with respect to $\omega_{\varphi}$.

A K\"ahler metric $\omega_{\varphi} \in (\mathcal{H})^{\mathbb{K}}$ is said to be an \textbf{extremal metric} if it satisfies
\begin{equation*}
	\bar{\partial} \mathrm{grad}^{1,0}_{\omega_{\varphi}} S(\omega_{\varphi}) = 0.
\end{equation*}
Futaki--Mabuchi \cite[Theorem C and Corollary D]{FutMab95} show that there exists a unique vector field called the \textbf{extremal vector field} $v_{\mathrm{ext}}$ which lies in the Lie algebra of $\mathbb{T}$ and agrees with the Hamiltonian Killing vector field of $S(\omega_{\varphi})$ when an extremal metric $\omega_{\varphi}$ exists. This vector field is periodic by \cite[Theorem F]{FutMab95} and \cite[Theorem 1.11]{Nak}, and generates a one-dimensional torus $\mathbb{T}_{\mathrm{ext}}$ in $\mathbb{T}$, which we call the \textbf{extremal torus}.

\subsection{Modified Mabuchi energy}

We write $V := \int_X c_1(L)^n$ for the volume of $(X,L)$, and $\bar{S}:= - n \int_X c_1(K_X)c_1(L)^{n-1} / V$ for the average scalar curvature. We recall standard functionals
\begin{align*}
	E(\varphi) &:= \frac{1}{V} \sum_{j=0}^n \int_X \varphi \omega^j_{\varphi} \wedge \omega^{n-j} , \\
	E_{\mathrm{Ric} (\omega)}(\varphi) &:= \frac{1}{V} \sum_{j=0}^{n-1} \int_X \varphi \omega^j_{\varphi} \wedge \omega^{n-j-1} \wedge \mathrm{Ric} (\omega ) , \\
	J (\varphi) &:=  \frac{1}{V} \int_X \varphi \omega^n - \frac{1}{n+1} E (\varphi ),
\end{align*}
and the entropy
\begin{equation*}
	H(\varphi) := \frac{1}{V} \int_X \log \left( \frac{\omega^n_{\varphi}}{\omega^n} \right) \omega^n_{\varphi} ,
\end{equation*}
defined for $\varphi \in \CH$. The Mabuchi energy $M : \mathcal{H} \to \mathbb{R}$ is defined by
\begin{equation*}
	M (\varphi) := \frac{\bar{S}}{n+1} E (\varphi) - E_{\mathrm{Ric}(\omega )} (\varphi ) +H(\varphi).
\end{equation*}

We define the functional $J_{\mathrm{ext}}$, following e.g.~\cite[section 4.1]{BB17}.
\begin{definition}
The functional $J_{\mathrm{ext}} : (\mathcal{H})^{\mathbb{K}} \to \mathbb{R}$ is defined by
	\begin{equation*}
	J_{\mathrm{ext}} (\varphi) := \frac{1}{V} \int_0^1  \int_X \dot{\varphi}_t \theta (\varphi_t) \frac{\omega_{\varphi_t}^n}{n!} dt
\end{equation*}
where $\{ \varphi_t \}_{0 \le t \le 1} \subset (\mathcal{H})^{\mathbb{K}}$ is any path connecting $\varphi_0 = 0$ and $\varphi_1 = \varphi$, and $\theta (\varphi_t)$ is the holomorphy potential of the extremal vector field with respect to the K\"ahler metric $\omega_{\varphi_t}$.
\end{definition}

We can show that the above definition is well-defined, that it does not depend on the path $\{ \varphi_t \}_{0 \le t \le 1} \subset (\mathcal{H})^{\mathbb{K}}$ and depends only on the endpoints (see e.g.~\cite[section 4.1]{BB17}). It is important that the domain is $(\mathcal{H})^{\mathbb{K}}$ (and not $\mathcal{H}$) so that $\theta (\varphi_t)$ is an $\mathbb{R}$-valued function.

\begin{definition}
	The \textbf{modified Mabuchi energy} $M_{\mathrm{ext}} : (\mathcal{H})^{\mathbb{K}} \to \mathbb{R}$ is defined by
\begin{equation*}
	M_{\mathrm{ext}} (\varphi) := M(\varphi)+ J_{\mathrm{ext}} (\varphi).
\end{equation*}
\end{definition}

It is well-known that $M_{\mathrm{ext}}$ is convex along $C^{1,\bar{1}}$-geodesics and its critical point is the extremal metric \cite{BB17}. He \cite[Proposition 2.2]{He19} proved that $J_{\mathrm{ext}}$ extends as a $d_1$-continuous function to $J_{\mathrm{ext}} : (\mathcal{E}^1)^{\mathbb{K}} \to \mathbb{R}$ which is affine linear on finite energy geodesics (based on the result of Berman--Berndtsson \cite[section 4]{BB17}). This implies in particular that the modified Mabuchi energy extends as a $d_1$-lower semicontinuous functional to $M_{\mathrm{ext}} : (\mathcal{E}^1)^{\mathbb{K}} \to \mathbb{R} \cup \{ + \infty \}$, and also proves that $M_{\mathrm{ext}}$ is convex along finite energy geodesics in $(\mathcal{E}^1)^{\mathbb{K}}$, as in \cite[Corollary 2.2]{He19}.

\subsection{Relative $K$-stability}

We recall here the bare minimum of materials concerning the $K$-stability, without reviewing various concepts that are necessary for its definition, since the details are involved. We follow the formulation of $K$-stability in terms of the non-Archimedean metrics as developed in \cite{BBJ,BHJ1,BHJ2,BJ18}, to which the reader is referred for more details and explanations. We write $\mathcal{H}^{\mathrm{NA}}$ for the set of all non-Archimedean metrics on $L$, which is a set of all equivalence classes of semiample test configurations for $(X,L)$, where the equivalence relation is given by the pullback.

The uniform $K$-stability can be defined in terms of the non-Archimedean Mabuchi energy $M^{\mathrm{NA}}$ and the non-Archimedean $J$-energy $J^{\mathrm{NA}}$; see \cite[section 7]{BHJ1} for the definitions. We recall that $(X,L)$ is said to be uniformly $K$-stable if there exists $\epsilon >0$ such that
\begin{equation*}
	M^{\mathrm{NA}} (\phi) \ge \epsilon J^{\mathrm{NA}} (\phi )
\end{equation*}
holds for any $\phi \in \mathcal{H}^{\mathrm{NA}}$.

When the automorphism group $\mathrm{Aut}_0 (X,L)$ is non-trivial, it is well-known that we need to consider a group equivariant version of the uniform $K$-stability. Firstly, we write $(\mathcal{H}^{\mathrm{NA}})^{\mathbb{K}}$ for the equivalence classes of $\mathbb{G}$-equivariant test configurations for $(X,L)$, following \cite[section 2.1.3]{Li20}. In place of $J^{\mathrm{NA}}$ used above, we use the reduced $J$-norm defined as
\begin{equation*}
		J^{\mathrm{NA}}_{\mathbb{T}} (\phi) := \inf_{\xi \in N_{\mathbb{R}}} J^{\mathrm{NA}} (\phi_{\xi})
\end{equation*}
for $\phi \in (\mathcal{H}^{\mathrm{NA}})^{\mathbb{K}}$, where $N_{\mathbb{R}} := \mathrm{Hom}_{\mathrm{Grp}} (\cx^*, \mathbb{T}) \otimes \mathbb{R}$ and $\phi_{\xi}$ is the twist of $\phi$ by $\xi$ (see \cite{His16} and \cite[Lemma 2.19]{Li20}).

We also need a modification term for the non-Archimedean Mabuchi energy when we deal with extremal metrics. Take $\phi \in (\mathcal{H}^{\mathrm{NA}})^{\mathbb{K}}$, represented by a test configuration $(\mathcal{X} , \mathcal{L})$. Following Yao \cite[Definition 3.1]{Yao19}, we define
\begin{equation} \label{eqyyaojnatc}
	J_{\mathrm{ext}}^{\mathrm{NA}} (\phi) := \frac{1}{V / n!} \frac{\mathcal{L}_{\beta}^{n+2}}{(n+2)!} - \frac{1}{(V / n!)^2} \left( \frac{\mathcal{L}^{n+1}}{(n+1)!} \right)^2,
\end{equation}
where $\mathcal{L}_{\beta}$ is defined as follows: $\mathcal{X}_{\beta}$ is the total space of the product test configuration of $(\mathcal{X} , \mathcal{L})$ with respect to the $\mathbb{C}^*$-action $\beta$ of the extremal vector field (so $\mathcal{X}_{\beta}$ is a product test configuration of a test configuration of $X$), and $\mathcal{L}_{\beta}$ is the corresponding $\mathbb{Q}$-Cartier divisor on $\mathcal{X}_{\beta}$, by noting that $(\mathcal{X} , \mathcal{L})$ is a test configuration whose defining $\mathbb{C}^*$-action commutes with the action by $\beta$. All the test configurations above are compactified over $\mathbb{P}^1$ as usual, and the intersection numbers above are computed with respect to this compactification. The above formula is well-defined, irrespectively of the representative $(\mathcal{X} , \mathcal{L})$ for $\phi$ chosen, by \cite[Proposition 3.2]{Yao19}. We also note that $b_0$ that appears in \cite[Definition 3.1]{Yao19} is exactly
\begin{equation*}
	b_0 = \lim_{s \to + \infty} \frac{E(\Phi(s))}{s} = E^{\mathrm{NA}} (\phi) = \frac{\mathcal{L}^{n+1}}{(n+1)!}
\end{equation*}
by \cite[Proposition 3]{Don05}, where $\{ \Phi (s) \}_{s \ge 0}$ is a $C^{1, \bar{1}}$-geodesic ray associated to $\phi$ (see \cite[Definition 3.3 (B)]{Yao19}). Note that $\{ \Phi (s) \}_{s \ge 0}$, as constructed by \cite[Theorem 1.1]{PS07} (see also \cite[Proposition 2.7]{Ber2016}), is a maximal geodesic ray in the sense of \cite[Definition 6.5]{BBJ} by \cite[Lemma 5.3]{BBJ}, since its construction shows that it has algebraic singularities \cite[Definition 4.5]{BBJ}; in fact it is known that there is a one-to-one correspondence between $\CH^{\mathrm{NA}}$ and geodesic rays with algebraic singularities, as pointed out in \cite[page 608 and section 4.4]{BBJ}.

Identifying the test configuration $(\mathcal{X} , \mathcal{L})$ and the $\mathbb{C}^*$-action $\alpha$ which defines it, the notation $\langle \alpha , \beta \rangle$ is also used \cite{Sze07} and can be regarded as a generalisation of the Futaki--Mabuchi bilinear form \cite{FutMab95} to non-product test configurations. It is the result of Yao \cite[Theorem 3.9]{Yao19} that $J_{\mathrm{ext}}^{\mathrm{NA}} (\phi)$ as defined above agrees with $\langle \alpha , \beta \rangle$, and we have
\begin{equation*}
	\lim_{s \to + \infty} \frac{J_{\mathrm{ext}} (\Phi (s))}{s} = J_{\mathrm{ext}}^{\mathrm{NA}} (\phi)
\end{equation*}
which is also due to Yao \cite[Theorem 3.7]{Yao19}.

\begin{remark} \label{rmhlegdf}
Han--Li \cite[Lemma 5.2, Proposition 5.8]{HanLi23} also proved similar results (in their notation, $J_{\mathrm{ext}}$ is written as $\bm{E}_g$ for an appropriate choice of $g$).	
\end{remark}

With all these understood and following \cite{Sze07}, the version of $K$-stability that we use in this paper can be defined as follows.

\begin{definition}
	$(X,L)$ is said to be \textbf{$\mathbb{G}$-uniformly $K$-stable relative to $\mathbb{T}_{\mathrm{ext}}$} if there exists $\epsilon >0$ such that
	\begin{equation*}
		M^{\mathrm{NA}} (\phi) +  J_{\mathrm{ext}}^{\mathrm{NA}} (\phi) \ge \epsilon J^{\mathrm{NA}}_{\mathbb{T}} (\phi)
	\end{equation*}
	holds for any $\phi \in (\mathcal{H}^{\mathrm{NA}})^{\mathbb{K}}$.
\end{definition}

We can consider stability relative to a torus larger than $\mathbb{T}_{\mathrm{ext}}$; in that case the invariant $J_{\mathrm{ext}}^{\mathrm{NA}}$ needs to change to eliminate the contributions from the larger torus.

The space of non-Archimedean metrics $\mathcal{H}^{\mathrm{NA}}$ admits a completion to the space $\mathcal{E}^{1 , \mathrm{NA}}$, analogously to the relationship between $\mathcal{H}$ and $\mathcal{E}^1$. The space $\mathcal{E}^{1 , \mathrm{NA}}$ can be endowed with a topology called the strong topology \cite[section 12.1]{BJ18}, and we can also define $(\mathcal{E}^{1 , \mathrm{NA}})^{\mathbb{K}}$ to be the set of elements in $\mathcal{E}^{1 , \mathrm{NA}}$ which can be realised as a limit of a decreasing sequence in $(\mathcal{H}^{\mathrm{NA}})^{\mathbb{K}}$ \cite[section 2.1.3]{Li20}. While $\mathcal{E}^{1 , \mathrm{NA}}$ and $(\mathcal{E}^{1 , \mathrm{NA}})^{\mathbb{K}}$ play a very important role for us, detailed explanation of their foundational properties are out of reach of this paper. The reader is referred to \cite{BJ18,BBJ,Li20} for more details.

There is another generalisation of test configurations, called models \cite{Li20}. The definition of models is similar to that of test configurations, but we are allowed to consider non-semiample polarisations for models \cite[Definition 2.1]{Li20}. Models define filtrations of the section ring of $(X,L)$, called model filtrations \cite[Definition 2.7]{Li20}, and hence a plurisubharmonic function on $X^{\mathrm{an}}$ in the sense introduced in \cite{BJ18}. Following \cite[Definition 2.7]{Li20}, we write $\mathrm{PSH}^{\mathfrak{M} , \mathrm{NA}}$ for the model filtrations. We have
\begin{equation*}
	\mathcal{H}^{\mathrm{NA}} \subset \mathrm{PSH}^{\mathfrak{M} , \mathrm{NA}} \subset \mathcal{E}^{1 , \mathrm{NA}} ,
\end{equation*}
by noting that a plurisubharmonic function on $X^{\mathrm{an}}$ corresponding to a filtration is an increasing limit of its canonical approximants \cite[Definition 1.12]{BJK1}. The group equivariant version $(\mathrm{PSH}^{\mathfrak{M} , \mathrm{NA}})^{\mathbb{K}}$ is defined as $\mathrm{PSH}^{\mathfrak{M} , \mathrm{NA}} \cap (\mathcal{E}^{1 , \mathrm{NA}})^{\mathbb{K}}$, following \cite[Definition 2.25]{Li20}.

\section{Main technical results} \label{scmtcrs}

We assume that all geodesics in this paper emanate from $\Phi_{\mathrm{ref}} :=0 \in (\mathcal{H})^{\mathbb{K}}$. We also note that we have $E (\Phi_{\mathrm{ref}}) = J_{\mathrm{ext}} (\Phi_{\mathrm{ref}}) = 0$ in our normalisation of these functionals. We start with the following lemma which is likely well-known to the experts.

\begin{lemma} \label{lmappgsg}
	Let $\{ \phi_j \}_j \subset \mathcal{E}^{1 , \mathrm{NA}}$ be a decreasing net of non-Archimedean metrics converging to $\phi \in \mathcal{E}^{1 , \mathrm{NA}}$. Let $\{ \Phi (s) \}_{s \ge 0} \subset \mathcal{E}^1$  (resp.~$\{ \Phi_j (s) \}_{s \ge 0} \subset \mathcal{E}^1$) be the (unique) maximal geodesic ray associated to $\phi \in \mathcal{E}^{1 , \mathrm{NA}}$ (resp.~$\phi_j \in \mathcal{E}^{1 , \mathrm{NA}}$), which exists by \cite[Theorem 6.6]{BBJ}. Then
	\begin{equation*}
		\lim_j d_1 (\Phi_j (1), \Phi (1)) = 0 .
	\end{equation*}
\end{lemma}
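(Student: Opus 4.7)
The strategy is to split the argument into two independent steps: first, upgrade the hypothesized decreasing convergence $\phi_j \searrow \phi$ in $\mathcal{E}^{1,\mathrm{NA}}$ to strong convergence in the sense of \cite[section 12.1]{BJ18}; second, invoke \cite[Corollary 6.7]{BBJ} to transfer strong convergence on the non-Archimedean side to $d_1$-convergence of the associated maximal geodesic rays at fixed time, and specialize to $s=1$. The mention of \cite[Corollary 6.7]{BBJ} in the acknowledgments is the signal that this second step is the intended engine.

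For the first step, I would recall that strong convergence in $\mathcal{E}^{1,\mathrm{NA}}$ is characterized by convergence in the coarser (weak / pointwise on $X^{\mathrm{an}}$) topology together with convergence of the non-Archimedean energy $E^{\mathrm{NA}}$. A decreasing net automatically converges to its pointwise infimum in the weak sense, so only the energy convergence $E^{\mathrm{NA}}(\phi_j) \to E^{\mathrm{NA}}(\phi)$ needs checking. Since $\phi \in \mathcal{E}^{1,\mathrm{NA}}$, i.e.\ $E^{\mathrm{NA}}(\phi) > -\infty$, this follows from the monotone continuity of $E^{\mathrm{NA}}$ along decreasing nets in $\mathcal{E}^{1,\mathrm{NA}}$, which is part of the basic calculus built up in \cite{BJ18}.

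For the second step, \cite[Corollary 6.7]{BBJ} asserts precisely that the map sending a non-Archimedean potential to its maximal geodesic ray is continuous from $\mathcal{E}^{1,\mathrm{NA}}$ (strong topology) to $\mathcal{E}^1$ (the $d_1$-topology) at each fixed time $s \ge 0$. Applied to our net with the strong convergence from step one, this yields $d_1(\Phi_j(s), \Phi(s)) \to 0$ for every $s$, in particular at $s=1$.

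The main conceptual point — the only nontrivial input — is the $d_1$-continuity of the maximal geodesic ray construction at a fixed time, which is absorbed entirely into \cite[Corollary 6.7]{BBJ}. What remains on our side is essentially bookkeeping: verifying that "decreasing net with limit in $\mathcal{E}^{1,\mathrm{NA}}$" matches the hypothesis "strongly convergent net" expected by that corollary. I do not foresee additional obstacles beyond cleanly citing the correct characterization of the strong topology and the monotone-continuity property of $E^{\mathrm{NA}}$.
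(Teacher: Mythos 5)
Your proposal is correct, and in fact the paper itself flags your route at the end of its proof (``we can also get the same result by using \cite[Corollary 6.7]{BBJ}''), but it is not the argument the paper actually carries out. The paper's proof is self-contained and more hands-on: since the $\Phi_j$ and $\Phi$ are maximal, their Monge--Amp\`ere energy slopes equal $E^{\mathrm{NA}}(\phi_j)$ and $E^{\mathrm{NA}}(\phi)$, which converge by monotonicity of $E^{\mathrm{NA}}$ along the decreasing net; the ordering $\phi_j \ge \phi$ plus maximality gives $\Phi_j \ge \Phi$, so Darvas' identity yields $E(\Phi_j(s)) - E(\Phi(s)) = d_1(\Phi_j(s), \Phi(s))$; and convexity of $s \mapsto d_1(\Phi_j(s),\Phi(s))$ from \cite[Proposition 5.1]{BDL17} lets one bound $d_1(\Phi_j(1),\Phi(1))$ by the slope at infinity, which tends to $0$. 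What the paper's route buys is transparency of the mechanism (energy slopes plus the $d_1$-versus-$E$ identity for ordered rays), which is then reused almost verbatim in the proof of Proposition \ref{ppctjestp} for nets that are merely strongly convergent, where one sandwiches $\phi_j$ by a decreasing net $\psi_j \ge \phi_j$; your route buys brevity by delegating everything to \cite[Corollary 6.7]{BBJ}. The one point you should pin down before writing this up is the exact formulation of that corollary: you paraphrase it as strong-topology-to-$d_1$ continuity of $\phi \mapsto \Phi(s)$ at fixed $s$, and if it is instead phrased for decreasing nets, your preliminary step upgrading decreasing convergence to strong convergence (which is itself fine, via weak convergence plus continuity of $E^{\mathrm{NA}}$ along decreasing nets) would be unnecessary, while if it is phrased for strong convergence your argument goes through as written; either way the substance is sound.
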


\begin{proof}
	Note first that we have
	\begin{equation*}
		\lim_{j} E^{\mathrm{NA}} (\phi_j) = E^{\mathrm{NA}} (\phi),
	\end{equation*}
	as $\phi_j$ decreases to $\phi$. Since $\Phi$ and $\Phi_j$ are maximal, we find $\lim_{s \to + \infty} E (\Phi (s) )/s = E^{\mathrm{NA}} (\phi)$ and also $\lim_{s \to + \infty} E (\Phi_j (s) )/s = E^{\mathrm{NA}} (\phi_j)$. Since $E^{\mathrm{NA}} (\phi_j) \to E^{\mathrm{NA}} (\phi)$ by assumption, we have
	\begin{equation*}
		\lim_{j} \lim_{s \to + \infty}\frac{1}{s} \left( E(\Phi_j (s)) - E (\Phi (s)) \right)=0.
	\end{equation*}

	Note that we have $\Phi_j \ge \Phi$ since $\phi_j$ decreases to $\phi$ and that $\Phi_j$, $\Phi$ are both maximal (see e.g.~\cite[Definition 6.5]{BBJ}). Then a result by Darvas \cite[Proof of Corollary 4.14]{Darvas15} shows that $E(\Phi_j (s)) - E (\Phi (s)) = d_1 (\Phi_j (s) , \Phi (s))$ since $\Phi_j \ge \Phi$. Since $d_1 (\Phi_j (s) , \Phi (s))$ is convex in $s$ by \cite[Proposition 5.1]{BDL17}, the difference quotient $d_1 (\Phi_j (s) , \Phi (s))/s$ is monotonically increasing in $s$, which in turn implies that we have
	\begin{align*}
		0 &= \lim_{j} \lim_{s \to + \infty}\frac{1}{s} \left( E(\Phi_j (s)) - E (\Phi (s)) \right) \\
		&\ge \lim_{j} d_1 (\Phi_j (1) , \Phi (1)) \ge 0,
	\end{align*}
	hence the result. Note that we can also get the same result by using \cite[Corollary 6.7]{BBJ}.
\end{proof}

\begin{lemma} \label{lmapenatc}
	Let $\{ \phi_j \}_j \subset (\mathcal{E}^{1 , \mathrm{NA}})^{\mathbb{K}}$ be a decreasing net of non-Archimedean metrics converging to $\phi \in (\mathcal{E}^{1 , \mathrm{NA}})^{\mathbb{K}}$. Let $\{ \Phi (s) \}_{s \ge 0} \subset (\mathcal{E}^1)^{\mathbb{K}}$ (resp.~$\{ \Phi_j (s) \}_{s \ge 0} \subset (\mathcal{E}^1)^{\mathbb{K}}$) be the maximal geodesic ray associated to $\phi \in (\mathcal{E}^{1 , \mathrm{NA}})^{\mathbb{K}}$ (resp.~$\phi_j \in (\mathcal{E}^{1 , \mathrm{NA}})^{\mathbb{K}}$). Then
	\begin{equation*}
		\lim_{s \to + \infty} \frac{J_{\mathrm{ext}} (\Phi (s))}{s} = \lim_{j} \lim_{s \to + \infty} \frac{J_{\mathrm{ext}} (\Phi_j (s))}{s}.
	\end{equation*}
\end{lemma}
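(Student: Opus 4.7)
The proof should reduce to a direct combination of Lemma \ref{lmappgsg}, He's $d_1$-continuity of $J_{\mathrm{ext}}$ on $(\mathcal{E}^1)^{\mathbb{K}}$, and the affine linearity of $J_{\mathrm{ext}}$ along finite energy geodesics.

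The plan is as follows. First, because $J_{\mathrm{ext}}$ extends as a $d_1$-continuous functional on $(\mathcal{E}^1)^{\mathbb{K}}$ which is affine linear along finite energy geodesics (He's \cite{He19} result recalled above), and because our convention is $\Phi(0) = \Phi_j(0) = \Phi_{\mathrm{ref}} = 0$ with $J_{\mathrm{ext}}(0) = 0$, the affine linearity gives
\begin{equation*}
	J_{\mathrm{ext}}(\Phi(s)) = s \cdot J_{\mathrm{ext}}(\Phi(1)), \qquad J_{\mathrm{ext}}(\Phi_j(s)) = s \cdot J_{\mathrm{ext}}(\Phi_j(1))
\end{equation*}
for all $s \ge 0$. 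Hence the slopes appearing in the statement are actually constant in $s$, so
\begin{equation*}
	\lim_{s \to +\infty} \frac{J_{\mathrm{ext}}(\Phi(s))}{s} = J_{\mathrm{ext}}(\Phi(1)), \qquad \lim_{s \to +\infty} \frac{J_{\mathrm{ext}}(\Phi_j(s))}{s} = J_{\mathrm{ext}}(\Phi_j(1)),
\end{equation*}
and the claim reduces to showing $\lim_j J_{\mathrm{ext}}(\Phi_j(1)) = J_{\mathrm{ext}}(\Phi(1))$.

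Next, I would invoke Lemma \ref{lmappgsg}: the hypothesis of that lemma is that $\{\phi_j\}$ is a decreasing net in $\mathcal{E}^{1,\mathrm{NA}}$ converging to $\phi$, which is exactly what we have (the $\mathbb{K}$-invariance plays no role in the proof of Lemma \ref{lmappgsg} itself, and guarantees only that each $\Phi_j$ and $\Phi$ automatically lies in $(\mathcal{E}^1)^{\mathbb{K}}$, since $\mathbb{K}$-invariance of the non-Archimedean metric passes to its maximal geodesic ray). Lemma \ref{lmappgsg} then yields
\begin{equation*}
	\lim_j d_1(\Phi_j(1), \Phi(1)) = 0.
\end{equation*}

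Finally, applying the $d_1$-continuity of $J_{\mathrm{ext}}$ on $(\mathcal{E}^1)^{\mathbb{K}}$ to this convergence gives $\lim_j J_{\mathrm{ext}}(\Phi_j(1)) = J_{\mathrm{ext}}(\Phi(1))$, which combined with the two displayed slope identities above concludes the proof. The only subtle point, and perhaps the main thing to verify carefully, is that the maximal geodesic rays $\Phi_j, \Phi$ associated to $\mathbb{K}$-invariant non-Archimedean potentials indeed remain in $(\mathcal{E}^1)^{\mathbb{K}}$ so that He's continuity applies; this follows from the uniqueness of the maximal geodesic ray and its construction via Perron-type envelopes, both of which respect the $\mathbb{K}$-action.
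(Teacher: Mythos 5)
Your proposal is correct and follows essentially the same route as the paper: affine linearity of $J_{\mathrm{ext}}$ with the normalisation $J_{\mathrm{ext}}(\Phi_{\mathrm{ref}})=0$ turns both slopes into $J_{\mathrm{ext}}(\Phi(1))$ and $J_{\mathrm{ext}}(\Phi_j(1))$, after which Lemma \ref{lmappgsg} and the $d_1$-continuity of $J_{\mathrm{ext}}$ finish the argument. The ``subtle point'' you flag (that the maximal rays lie in $(\mathcal{E}^1)^{\mathbb{K}}$) is exactly the paper's Lemma \ref{lmmgdrkivt}, which the paper establishes via $\mathbb{K}$-invariant Phong--Sturm rays and Lemma \ref{lmappgsg} rather than your uniqueness/equivariance sketch, but either justification is acceptable.
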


We first establish the following claim, which seems to have a folklore status among the experts. Its proof is also embedded in \cite[Proof of Theorem 8.6]{FH24}, but we provide an alternative proof here.
\begin{lemma} \label{lmmgdrkivt}
For any $\phi \in (\mathcal{E}^{1 , \mathrm{NA}})^{\mathbb{K}}$ we can find a maximal geodesic ray $\{ \Phi (s) \}_{s \ge 0} \subset (\mathcal{E}^1)^{\mathbb{K}}$ associated to it.
\end{lemma}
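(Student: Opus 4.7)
The plan is to approximate $\phi$ by $\mathbb{K}$-invariant semiample test configurations, associate $\mathbb{K}$-invariant Phong--Sturm geodesic rays to them, and then transport $\mathbb{K}$-invariance to the limit using Lemma \ref{lmappgsg}.

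First I would unpack the definition of $(\mathcal{E}^{1,\mathrm{NA}})^{\mathbb{K}}$ recalled in section \ref{scprlm} to obtain a decreasing net $\{\phi_j\}_j \subset (\mathcal{H}^{\mathrm{NA}})^{\mathbb{K}}$ with $\phi_j \searrow \phi$. Each $\phi_j$ is represented by a $\mathbb{G}$-equivariant semiample test configuration $(\mathcal{X}_j,\mathcal{L}_j)$, and hence carries a Phong--Sturm $C^{1,\bar 1}$-geodesic ray $\{\Phi_j(s)\}_{s\ge 0} \subset \mathcal{E}^1$ by \cite[Theorem 1.1]{PS07}, which is maximal in the sense of \cite[Definition 6.5]{BBJ} by \cite[Lemma 5.3]{BBJ}, as recalled in the excerpt.

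Second, I would argue that each $\Phi_j(s)$ is in fact $\mathbb{K}$-invariant. The cleanest way is to use the envelope characterisation of \cite[Proposition 2.7]{Ber2016}: $\Phi_j(s)$ is realised as the upper envelope of all $\omega$-psh subgeodesics bounded above by the boundary data canonically attached to $(\mathcal{X}_j,\mathcal{L}_j)$. Since the test configuration is $\mathbb{G}$-equivariant, both the boundary datum and the class of competing subgeodesics are preserved by the $\mathbb{K}$-action, so the envelope is $\mathbb{K}$-invariant and $\Phi_j(s)\in(\mathcal{E}^1)^{\mathbb{K}}$ for all $s\ge 0$.

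Third, Lemma \ref{lmappgsg} applied to the net $\{\phi_j\}_j$ (and, after rescaling, to $\{c\phi_j\}_j$ decreasing to $c\phi$ for any $c>0$; equivalently, by invoking the monotonicity of $s\mapsto d_1(\Phi_j(s),\Phi(s))/s$ from \cite[Proposition 5.1]{BDL17} that already appears inside the proof of Lemma \ref{lmappgsg}) yields
\begin{equation*}
	\lim_{j} d_1(\Phi_j(s),\Phi(s)) = 0
\end{equation*}
for every fixed $s>0$, where $\Phi$ is the unique maximal geodesic ray associated to $\phi$ provided by \cite[Theorem 6.6]{BBJ}. Since $\mathbb{K}$ acts as $d_1$-isometries on $\mathcal{E}^1$ and $(\mathcal{E}^1)^{\mathbb{K}}$ is $d_1$-closed, the $d_1$-limit $\Phi(s)$ lies in $(\mathcal{E}^1)^{\mathbb{K}}$ for every $s\ge 0$, proving the claim.

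The main obstacle I expect is Step two: showing $\mathbb{K}$-invariance of each Phong--Sturm ray requires picking a construction in which equivariance is manifest. Using Berman's envelope description sidesteps any appeal to the parabolic approximation scheme of \cite{PS07}, where tracking equivariance by hand would be more delicate; once that is in place, Steps one and three are routine applications of the preliminaries and Lemma \ref{lmappgsg}.
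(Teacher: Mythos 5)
Your proposal is correct and follows essentially the same route as the paper: approximate $\phi$ by a decreasing net in $(\mathcal{H}^{\mathrm{NA}})^{\mathbb{K}}$, show each associated Phong--Sturm ray is $\mathbb{K}$-invariant, and pass to the limit via Lemma \ref{lmappgsg} together with the $d_1$-closedness of $(\mathcal{E}^1)^{\mathbb{K}}$. The only variation is in the invariance step: the paper tracks $\mathbb{K}$-invariance directly through the construction of \cite[Theorem 1.1]{PS07}, checking that the upper semicontinuous regularisation of a $\mathbb{K}$-invariant function stays $\mathbb{K}$-invariant because $\mathbb{K}$ acts by isometries, whereas you invoke the envelope description of \cite[Proposition 2.7]{Ber2016}; this is legitimate since the maximal ray attached to a given element of $\mathcal{H}^{\mathrm{NA}}$ is unique, so either construction identifies the same ray.
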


\begin{proof}
First note that Berman--Boucksom--Jonsson \cite[Theorem 6.6]{BBJ} prove that there indeed exists a (unique) maximal geodesic ray in $\{ \Phi_0 (s) \}_{s \ge 0} \subset \mathcal{E}^1$ associated to $\phi$, so it suffices to show that $\{ \Phi_0 (s) \}_{s \ge 0}$ is contained in $(\mathcal{E}^{1 })^{\mathbb{K}}$.

By definition \cite[section 2.1.3]{Li20}, there exists a sequence $\{ \psi_j \}_j \subset (\mathcal{H}^{\mathrm{NA}})^{\mathbb{K}}$ which decreases to $\phi$. Since the non-Archimedean Monge--Amp\`ere energy is monotone along decreasing sequences by \cite[Theorem 6.9]{BJ18}, we find $\lim_{j} E^{\mathrm{NA}} (\psi_j) = E^{\mathrm{NA}} (\phi)$.

To each $\psi_j \in (\mathcal{H}^{\mathrm{NA}})^{\mathbb{K}}$ we can associate a subgeodesic ray of $\mathbb{K}$-invariant smooth K\"ahler potentials, and hence a (maximal) $C^{1 , \bar{1}}$-geodesic $\Psi_j$ by Phong--Sturm \cite[Theorem 1.1]{PS07}. We observe that $\Psi_j$ is $\mathbb{K}$-invariant, as follows. Indeed, in the notation of \cite[Theorem 1.1]{PS07}, the subgeodesic ray $\{ \phi (t;l) \}_{t \ge 0}$ is $\mathbb{K}$-invariant for all $l \in \mathbb{Z}_{>0}$, since the test configuration representing $\psi_j \in (\mathcal{H}^{\mathrm{NA}})^{\mathbb{K}}$ is $\mathbb{K}$-invariant and hence the ray $\phi (t;l)$ constructed as in \cite[section 4.2]{PS07} can be easily seen to be $\mathbb{K}$-invariant. The $C^{1 , \bar{1}}$-geodesic ray $\{ \Psi_j (t) \}_{t \ge 0}$ given by \cite[Theorem 1.1]{PS07} is defined by
\begin{equation*}
	\Psi_j (t) = \lim_{k \to \infty} \left( \sup_{l \ge k} [\phi(t;l)] \right)^* ,
\end{equation*}
where $\ast$ stands for the upper semicontinuous regularisation. It suffices to show that if $u (x,t)$ is a $\mathbb{K}$-invariant function on $X \times \mathbb{R}_{\ge 0}$ then so is $u^* (x,t)$; recall that the definition of $u^*$ is given by 
\begin{equation*}
	u^* (x,t) = \lim_{\epsilon \to 0} \sup_{d_{\omega}(x',x)+|t - t'| < \epsilon} u(x',t').
\end{equation*}
We then have, for any $k \in \mathbb{K}$,
\begin{align*}
	u^* (k \cdot x,t) &= \lim_{\epsilon \to 0} \sup_{d_{\omega} (x',k \cdot x)+|t - t'| < \epsilon} u(x',t') \\
	&=\lim_{\epsilon \to 0} \sup_{d_{\omega} (k^{-1} \cdot x', x)+|t - t'| < \epsilon} u(x',t') \\
	&=\lim_{\epsilon \to 0} \sup_{d_{\omega} (k^{-1} \cdot x', x)+|t - t'| < \epsilon} u(k^{-1} \cdot x',t') \\
	&=u^* (x,t)
\end{align*}
where in the second equality we used that $\mathbb{K}$ acts isometrically on $(X , \omega )$ (or the induced metric space $(X , d_{\omega})$) and in the third equality we used that $u$ is $\mathbb{K}$-invariant, showing that $u^*$ is indeed $\mathbb{K}$-invariant.

Now, any geodesic segment of $\Psi_j$ converges to the corresponding segment of the maximal geodesic $\Phi_0$ in $d_1$, by Lemma \ref{lmappgsg}. Since the $d_1$-limit of $\mathbb{K}$-invariant geodesic rays is also $\mathbb{K}$-invariant, we finally see that $\{ \Phi_0 (s) \}_{s}$ is contained in $(\mathcal{E}^{1 })^{\mathbb{K}}$.
\end{proof}

\begin{proof}[Proof of Lemma \ref{lmapenatc}]
Since $J_{\mathrm{ext}}$ is affine linear on geodesics, we may write $J_{\mathrm{ext}} (\Phi (s)) = c s $ for some constant $c \in \mathbb{R}$, as we normalised $J_{\mathrm{ext}}$ to be zero at $\Phi (0) = \Phi_{\mathrm{ref}}$. Thus we see that
	\begin{equation*}
		\lim_{s \to + \infty} \frac{J_{\mathrm{ext}} (\Psi (s))}{s} = J_{\mathrm{ext}} (\Psi (1))
	\end{equation*}
	for any geodesic ray $\Psi (s)$ in $( \mathcal{E}^1 )^{\mathbb{K}}$ emanating from $\Phi_{\mathrm{ref}}$. Thus the result follows from Lemmas \ref{lmappgsg} and \ref{lmmgdrkivt}, and the $d_1$-continuity of $J_{\mathrm{ext}}$.
\end{proof}

Following the above argument, we make the following definition.
\begin{definition} \label{dfjnaexsle1}
	We define a map $J_{\mathrm{ext}}^{\mathrm{NA}} : (\mathcal{E}^{1 , \mathrm{NA}})^{\mathbb{K}} \to \rl$ by
	\begin{equation*}
	J_{\mathrm{ext}}^{\mathrm{NA}} (\phi) := \lim_{s \to + \infty} \frac{J_{\mathrm{ext}} (\Phi (s))}{s} = J_{\mathrm{ext}} (\Phi (1))
	\end{equation*}
	where $\Phi$ is the unique maximal geodesic ray in $(\mathcal{E}^1)^{\mathbb{K}}$ associated to $\phi$, constructed by Berman--Boucksom--Jonsson \cite[Theorem 6.6]{BBJ} (see also Lemma \ref{lmmgdrkivt}).
\end{definition}

\begin{remark} \label{rmhleaegdf}
	Similar slope formulae were obtained by Han--Li \cite{HanLi23} for $g$-solitons. Apostolov--Jubert--Lahdili \cite{AJL23} and Lahdili \cite{Lah19} also proved ones for weighted cscK metrics with respect to K\"ahler test configurations (i.e.~ample test configurations with smooth total space and reduced central fibre); see also Inoue \cite{Ino} for $\mu$-cscK metrics.
\end{remark}

\begin{proposition} \label{ppctjestp}
Let $\{ \phi_j \}_j \subset (\mathcal{E}^{1 , \mathrm{NA}})^{\mathbb{K}}$ be a net that converges strongly to $\phi \in (\mathcal{E}^{1 , \mathrm{NA}})^{\mathbb{K}}$. Then 
\begin{equation*}
	\lim_{j} J_{\mathrm{ext}}^{\mathrm{NA}} (\phi_j) = J_{\mathrm{ext}}^{\mathrm{NA}} (\phi).
\end{equation*}

\end{proposition}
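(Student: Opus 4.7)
My approach is to reduce the proposition to a $d_1$-continuity statement for the associated maximal geodesic rays, and then invoke the Berman--Boucksom--Jonsson strong-continuity machinery. First I would use Definition \ref{dfjnaexsle1} to rewrite both sides: $J_{\mathrm{ext}}^{\mathrm{NA}} (\phi) = J_{\mathrm{ext}} (\Phi (1))$ and $J_{\mathrm{ext}}^{\mathrm{NA}} (\phi_j) = J_{\mathrm{ext}} (\Phi_j (1))$, where $\Phi, \Phi_j \in (\mathcal{E}^1)^{\mathbb{K}}$ are the unique $\mathbb{K}$-invariant maximal geodesic rays emanating from $\Phi_{\mathrm{ref}} = 0$ supplied by Lemma \ref{lmmgdrkivt}. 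Since $J_{\mathrm{ext}}$ extends to a $d_1$-continuous functional on $(\mathcal{E}^1)^{\mathbb{K}}$ by He \cite[Proposition 2.2]{He19}, the statement reduces to showing $d_1 (\Phi_j (1), \Phi (1)) \to 0$ along the strongly convergent net.

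The second step is to establish precisely this $d_1$-convergence of maximal geodesic rays from strong convergence of the non-Archimedean metrics. This is a direct consequence of Berman--Boucksom--Jonsson \cite[Corollary 6.7]{BBJ} — the same reference already appealed to at the end of the proof of Lemma \ref{lmappgsg} — which asserts that the slope/maximal-geodesic correspondence $\phi \leftrightarrow \Phi(1)$ turns strong convergence in $\mathcal{E}^{1,\mathrm{NA}}$ into $d_1$-convergence in $\mathcal{E}^1$. Unlike the version proved as Lemma \ref{lmappgsg} by hand, the BBJ statement does not require monotonicity of the net. Passage to the $\mathbb{K}$-invariant subspace is legitimate because Lemma \ref{lmmgdrkivt} shows that maximal geodesic rays associated to $\mathbb{K}$-invariant non-Archimedean metrics remain $\mathbb{K}$-invariant.

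\textbf{Main obstacle.} The whole technical content is compressed into the BBJ strong-continuity result; granted that, the conclusion is a one-line chain $J_{\mathrm{ext}}^{\mathrm{NA}}(\phi_j) = J_{\mathrm{ext}}(\Phi_j(1)) \to J_{\mathrm{ext}}(\Phi(1)) = J_{\mathrm{ext}}^{\mathrm{NA}}(\phi)$. Should one wish to avoid the direct appeal to \cite[Corollary 6.7]{BBJ} and stay closer to the techniques developed in the present paper, the fallback would be a sandwich/diagonal argument: approximate $\phi$ from above by a decreasing sequence $\tilde{\phi}_k \in (\mathcal{H}^{\mathrm{NA}})^{\mathbb{K}}$ (available by the definition of $(\mathcal{E}^{1,\mathrm{NA}})^{\mathbb{K}}$), similarly approximate each $\phi_j$, control the resulting triple error via Lemma \ref{lmapenatc} together with the triangle inequality in $d_1$ on the ray endpoints, and then extract a diagonal. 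The delicate point there would be verifying that the diagonal sequence assembled from the approximants is itself strongly convergent and orderable so that Lemma \ref{lmappgsg} applies — which is essentially redoing the work of BBJ.
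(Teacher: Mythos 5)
Your first step is exactly the paper's reduction: by Definition \ref{dfjnaexsle1} and the $d_1$-continuity of $J_{\mathrm{ext}}$ on $(\mathcal{E}^1)^{\mathbb{K}}$ \cite[Proposition 2.2]{He19}, the proposition comes down to showing $d_1(\Phi_j(1),\Phi(1))\to 0$. The genuine gap is in your second step: \cite[Corollary 6.7]{BBJ} is not a statement about arbitrary strongly convergent nets; it concerns decreasing nets, which is why the present paper cites it only as an alternative route to Lemma \ref{lmappgsg}. Strong convergence gives $E^{\mathrm{NA}}(\phi_j)\to E^{\mathrm{NA}}(\phi)$ but no order relation between $\phi_j$ and $\phi$, and without an order relation the identity $E(\Phi_j(s))-E(\Phi(s))=d_1(\Phi_j(s),\Phi(s))$ from \cite[Proof of Corollary 4.14]{Darvas15} --- the only mechanism used here to convert convergence of energy slopes into $d_1$-convergence of ray endpoints --- is unavailable. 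Claiming that the map $\phi\mapsto\Phi(1)$ is continuous from the strong topology to $d_1$ ``without monotonicity'' is precisely the content of the proposition, so your main route amounts to assuming what is to be proved, backed by a misattributed citation.

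The paper closes this gap by a construction that your fallback only gestures at: using the envelope property (\cite[Theorem 5.20]{BJ18}, \cite[Corollary 3.16]{DXZ23}) it produces a \emph{decreasing} net $\psi_j\in\mathcal{H}^{\mathrm{NA}}$ with $\psi_j\ge\phi_j$ converging to $\phi$ (essentially the regularised envelope of $\sup_{k\ge j}\phi_k$, combined via maxima with a decreasing approximation of $\phi$). Then $\Psi_j\ge\Phi_j$ for the associated maximal rays, so Darvas' identity applies to the pair $(\Psi_j,\Phi_j)$; convexity of $s\mapsto d_1(\Psi_j(s),\Phi_j(s))$ \cite[Proposition 5.1]{BDL17} bounds $d_1(\Psi_j(1),\Phi_j(1))$ by the slope difference $E^{\mathrm{NA}}(\psi_j)-E^{\mathrm{NA}}(\phi_j)\to 0$, Lemma \ref{lmappgsg} applied to the decreasing net $\psi_j$ gives $d_1(\Psi_j(1),\Phi(1))\to 0$, and the triangle inequality finishes. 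Your fallback (approximating $\phi$ and each $\phi_j$ separately and diagonalising) never produces this dominating decreasing net and leaves exactly that comparison unproven, so as written the proposal does not contain a complete proof.
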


\begin{proof}
	If the net $\{ \phi_j \}_j \subset (\mathcal{E}^{1 , \mathrm{NA}})^{\mathbb{K}}$ decreases to $\phi$, it is immediate from Lemmas \ref{lmappgsg}, \ref{lmapenatc}, and Definition \ref{dfjnaexsle1} that we have indeed $J_{\mathrm{ext}}^{\mathrm{NA}} (\phi) = \lim_{j} J_{\mathrm{ext}}^{\mathrm{NA}} (\phi_j)$.
		
	When we take a net $\{ \phi_j \}_j \subset (\mathcal{E}^{1, \mathrm{NA}})^{\mathbb{K}}$ in general that converges strongly to $\phi$, we observe that there exists a decreasing net $\{ \psi_j \}_j \subset \mathcal{H}^{\mathrm{NA}} \subset \mathcal{E}^{1 , \mathrm{NA}}$ which converges to $\phi$ and $\psi_j \ge \phi_j$ for all $j$. This follows from the envelope property (see \cite[Theorem 5.20]{BJ18} and \cite[Corollary 3.16]{DXZ23}), which implies that the upper semicontinuous regularisation of $\sup_{j \ge l} \phi_j$, say $\psi^{(1)}_j$, is plurisubharmonic (on $X^{\mathrm{an}}$, in the sense introduced in \cite{BJ18}) for any $l$ and hence can be approximated from above by elements in $\mathcal{H}^{\mathrm{NA}}$. We further take a net in $\mathcal{H}^{\mathrm{NA}}$, say $\{ \psi^{(2)}_j \}_j$, which decreases to $\phi$ and define $\psi_j := \max \{ \psi^{(1)}_j, \psi^{(2)}_j \}$ (note that plurisubharmonic functions are stable under finite maxima \cite[page 649]{BJ18}, and we may assume that the index set is the same for both $\psi^{(1)}_j$ and $\psi^{(2)}_j$ by considering a product preorder as in \cite[Proof of Lemma 4.9]{BJ18}) and approximate it from above by an element of $\mathcal{H}^{\mathrm{NA}}$ if necessary.

	Since $\phi_j \to \phi$ strongly, we have $\lim_{j}  E^{\mathrm{NA}} (\phi_j) = E^{\mathrm{NA}} (\phi)$, and hence
	\begin{equation*}
		 \lim_{j} E^{\mathrm{NA}} (\phi_j) = E^{\mathrm{NA}} (\phi) =  \lim_{j} E^{\mathrm{NA}} (\psi_j)
	\end{equation*}
	since $\{ \psi_j \}_j$ decreases to $\phi \in (\mathcal{E}^{1 , \mathrm{NA}})^{\mathbb{K}}$. Now write $\{ \Phi_j (s) \}_{s \ge 0}$ and $\{ \Psi_j (s) \}_{s \ge 0}$ for the maximal geodesic rays associated to $\phi_j$ and $\psi_j$ respectively, which then implies $\lim_{s \to + \infty} E (\Phi_j (s) )/s = E^{\mathrm{NA}} (\phi_j)$ and $\lim_{s \to + \infty} E (\Psi_j (s) )/s = E^{\mathrm{NA}} (\psi_j)$. Since $\psi_j \ge \phi_j$, we find $\Psi_j \ge \Phi_j$ which further implies $E(\Psi_j (s)) - E (\Phi_j (s)) = d_1 (\Psi_j (s) , \Phi_j (s))$ by \cite[Proof of Corollary 4.14]{Darvas15}. Combining all these results, we conclude
	\begin{align*}
		0 &= \lim_{j} \lim_{s \to + \infty}\frac{1}{s} \left( E(\Psi_j (s)) - E (\Phi_j (s)) \right) \\
		&\ge \lim_{j} d_1 (\Psi_j (1) , \Phi_j (1)) \ge 0,
	\end{align*}
	just as we did in the proof of Lemma \ref{lmappgsg}. Noting that we have $d_1 (\Psi_j (1) , \Phi (1) ) \to 0$ by Lemma \ref{lmappgsg}, we finally find
	\begin{equation*}
		d_1 (\Phi_j (1) , \Phi (1)) \le d_1 (\Phi_j (1), \Psi_j (1)) + d_1 (\Psi_j (1) , \Phi (1)) \to 0,
	\end{equation*}
	which proves the claim by the $d_1$-continuity of $J_{\mathrm{ext}}$.
\end{proof}

The summary of the above argument is that the map $J_{\mathrm{ext}}^{\mathrm{NA}} : (\mathcal{H}^{\mathrm{NA}})^{\mathbb{K}} \to \rl$, defined in (\ref{eqyyaojnatc}), admits a continuous extension to $(\mathcal{E}^{1, \mathrm{NA}})^{\mathbb{K}}$.

\begin{corollary} \label{cctexjena}
	There exists a map $J_{\mathrm{ext}}^{\mathrm{NA}} : (\mathcal{E}^{1, \mathrm{NA}})^{\mathbb{K}} \to \rl$ which is continuous with respect to the strong topology, and agrees with Yao's formula
	\begin{equation*}
		J_{\mathrm{ext}}^{\mathrm{NA}} (\phi) = \frac{1}{V / n!} \frac{\mathcal{L}_{\beta}^{n+2}}{(n+2)!} - \frac{1}{(V / n!)^2} \left( \frac{\mathcal{L}^{n+1}}{(n+1)!} \right)^2
	\end{equation*}
	on $(\mathcal{H}^{\mathrm{NA}})^{\mathbb{K}}$.
\end{corollary}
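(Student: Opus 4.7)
The corollary is essentially a packaging of the preceding Definition \ref{dfjnaexsle1} and Proposition \ref{ppctjestp}, together with Yao's slope formula recalled in the preliminaries; the plan is to verify that nothing is lost when gluing these pieces together.

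First, I would take the map $J_{\mathrm{ext}}^{\mathrm{NA}} : (\mathcal{E}^{1,\mathrm{NA}})^{\mathbb{K}} \to \mathbb{R}$ to be the one constructed in Definition \ref{dfjnaexsle1}, namely $\phi \mapsto J_{\mathrm{ext}}(\Phi(1))$, where $\{\Phi(s)\}_{s \geq 0} \subset (\mathcal{E}^1)^{\mathbb{K}}$ is the maximal geodesic ray associated to $\phi$ supplied by Lemma \ref{lmmgdrkivt}. The well-definedness rests on uniqueness of the maximal ray (from \cite[Theorem 6.6]{BBJ}) and on the affine linearity of $J_{\mathrm{ext}}$ on finite-energy geodesics combined with the normalisation $J_{\mathrm{ext}}(\Phi_{\mathrm{ref}}) = 0$, so the slope $\lim_{s \to +\infty} J_{\mathrm{ext}}(\Phi(s))/s$ collapses to $J_{\mathrm{ext}}(\Phi(1))$.

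Next, strong continuity on $(\mathcal{E}^{1,\mathrm{NA}})^{\mathbb{K}}$ is exactly the statement of Proposition \ref{ppctjestp}, so nothing new has to be done there.

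Finally, to check the agreement with Yao's intersection-number formula on $(\mathcal{H}^{\mathrm{NA}})^{\mathbb{K}}$, I would fix $\phi \in (\mathcal{H}^{\mathrm{NA}})^{\mathbb{K}}$ represented by a semiample $\mathbb{G}$-equivariant test configuration $(\mathcal{X},\mathcal{L})$ and note that the associated $C^{1,\bar{1}}$-geodesic ray of Phong--Sturm \cite[Theorem 1.1]{PS07} is maximal by \cite[Lemma 5.3]{BBJ}, hence coincides with the maximal ray used in Definition \ref{dfjnaexsle1}. Yao's slope formula \cite[Theorem 3.7, Theorem 3.9]{Yao19}, already recalled before Remark \ref{rmhlegdf}, then identifies $\lim_{s \to +\infty} J_{\mathrm{ext}}(\Phi(s))/s$ with the right-hand side of (\ref{eqyyaojnatc}), giving the asserted formula. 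I do not anticipate a genuine obstacle here, since all the technical content lives in Lemma \ref{lmmgdrkivt} and Proposition \ref{ppctjestp}; the only care needed is to match the conventions (normalisation of $J_{\mathrm{ext}}$ at $\Phi_{\mathrm{ref}}$ and the choice of the maximal representative of the geodesic ray) so that the two expressions agree on the nose rather than up to an additive constant.
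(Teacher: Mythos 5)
Your proposal is correct and follows essentially the same route as the paper: the corollary is simply the assembly of Definition \ref{dfjnaexsle1} and Proposition \ref{ppctjestp}, with agreement on $(\mathcal{H}^{\mathrm{NA}})^{\mathbb{K}}$ coming from the maximality of the Phong--Sturm ray together with Yao's slope formula, exactly as recalled in the preliminaries. No gap to report.
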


Since we now have an invariant $J_{\mathrm{ext}}^{\mathrm{NA}} (\phi)$ defined for any $\phi \in (\mathcal{E}^{1 , \mathrm{NA}})^{\mathbb{K}}$, the generalisation of the relative uniform $K$-stability that we need for Theorem \ref{thexcl} can be given as follows.

\begin{definition} \label{dfguksrtm}
	$(X,L)$ is said to be \textbf{$\mathbb{G}$-uniformly $K$-stable relative to $\mathbb{T}_{\mathrm{ext}}$ over models} if there exists $\epsilon >0$ such that
	\begin{equation*}
		M^{\mathrm{NA}} (\phi) +  J_{\mathrm{ext}}^{\mathrm{NA}} (\phi) \ge \epsilon J^{\mathrm{NA}}_{\mathbb{T}} (\phi)
	\end{equation*}
	holds for any $\phi \in (\mathrm{PSH}^{\mathfrak{M} , \mathrm{NA}})^{\mathbb{K}}$.
\end{definition}

\section{Proof of Theorem \ref{thexcl}} \label{scpfomnth}

We are now ready to prove the main result, with the ingredients given so far. Suppose for contradiction that $(X,L)$ is $\mathbb{G}$-uniformly $K$-stable relative to $\mathbb{T}_{\mathrm{ext}}$ over $(\mathrm{PSH}^{\mathfrak{M} , \mathrm{NA}})^{\mathbb{K}}$ but does not admit an extremal metric. In this case, the modified Mabuchi energy fails to be coercive by \cite[Theorem 2]{He19}, which implies that there exists a finite energy geodesic ray $\{ \Phi (s) \}_{s \ge 0} \subset (\mathcal{E}^1)^{\mathbb{K}}$ such that
\begin{equation*}
	\lim_{s \to + \infty} \frac{M_{\mathrm{ext}} (\Phi (s))}{s} \le 0 \quad \text{and} \quad \inf_{\xi \in N_{\mathbb{R}}} \lim_{s \to + \infty} \frac{J (\Phi_{\xi} (s))}{s} = 1,
\end{equation*}
where $N_{\mathbb{R}} := \mathrm{Hom}_{\mathrm{Grp}} (\cx^*, \mathbb{T}) \otimes \mathbb{R}$; see \cite[Proof of Proposition 6.2]{Li20} for more details, and also \cite[Theorem 4.6]{NN25}. We further let $\phi \in (\mathcal{E}^{1 , \mathrm{NA}})^{\mathbb{K}}$ be the non-Archimedean metric associated to $\{ \Phi (s) \}_{s \ge 0}$.

First we find, by a result due to Li \cite[Theorem 1.7, Propositions 2.17 and 6.3]{Li20}, that there exists a sequence $\{ \phi_j \}_j \subset (\mathrm{PSH}^{\mathfrak{M} , \mathrm{NA}})^{\mathbb{K}}$ such that
\begin{equation*}
	\lim_{s \to + \infty} \frac{M (\Phi (s))}{s} \ge \lim_{j} M^{\mathrm{NA}} (\phi_j)
\end{equation*}
holds and that $\{ \phi_j \}_j$ converges to $\phi$ in the strong topology. Since $(X,L)$ is assumed to be $\mathbb{G}$-uniformly $K$-stable relative to $\mathbb{T}_{\mathrm{ext}}$ over models, there exists $\epsilon >0$ such that
\begin{equation*}
	M^{\mathrm{NA}} (\phi_j) +  J_{\mathrm{ext}}^{\mathrm{NA}} (\phi_j) \ge \epsilon J^{\mathrm{NA}}_{\mathbb{T}} (\phi_j)
\end{equation*}
holds for all $\phi_j \subset (\mathrm{PSH}^{\mathfrak{M} , \mathrm{NA}})^{\mathbb{K}}$.

Let $\Phi_j$ be the maximal geodesic ray in $(\mathcal{E}^{1})^{\mathbb{K}}$ associated to $\phi_j$ (Lemma \ref{lmmgdrkivt}). Thus we get, by arguing exactly as in \cite[Proof of Theorem 6.5]{Li20},
\begin{align*}
	\lim_{s \to + \infty}\frac{M (\Phi (s)) + J_{\mathrm{ext}} (\Phi (s))}{s}  &\ge \lim_{j} M^{\mathrm{NA}} (\phi_j) + \lim_{s \to + \infty} \frac{J_{\mathrm{ext}} (\Phi (s))}{s} \\
	&=\lim_{j} M^{\mathrm{NA}} (\phi_j) + \lim_{j} \lim_{s \to + \infty} \frac{J_{\mathrm{ext}} (\Phi_j (s))}{s} \\
	&=\lim_{j} \left( M^{\mathrm{NA}} (\phi_j) +  J_{\mathrm{ext}}^{\mathrm{NA}} (\phi_j) \right) \\
	&\ge \epsilon \lim_{j} J^{\mathrm{NA}}_{\mathbb{T}} (\phi_j) \\
	&= \epsilon \inf_{\xi \in N_{\mathbb{R}}} \lim_{s \to + \infty} \frac{J (\Phi_{\xi} (s))}{s}\\
	&=\epsilon >0
\end{align*}
which is a contradiction, where we used Proposition \ref{ppctjestp} and Definition \ref{dfjnaexsle1} in the second line, Definition \ref{dfjnaexsle1} in the third, and \cite[Corollary 6.1 and Lemma 6.4]{Li20} in the fifth.

We thus find that the modified Mabuchi energy is coercive on the space of $\mathbb{K}$-invariant K\"ahler potentials, and hence the extremal metric exists by \cite[Theorem 2]{He19}, if the manifold is $\mathbb{G}$-equivariantly uniformly $K$-stable relative to $\mathbb{T}_{\mathrm{ext}}$ over models, completing the proof of Theorem \ref{thexcl}.

\begin{remark}
If $X$ is of cohomogeneity one, by a result due to Odaka \cite[Appendix A]{Del20a}, we can even show that the sequence $\{ \phi_j \}_j $ in the above proof can be chosen from $ (\mathcal{H}^{\mathrm{NA}})^{\mathbb{K}}$. In this case the maximal geodesic is the $C^{1, \bar{1}}$-geodesic constructed by Phong--Sturm \cite[Theorem 1.1]{PS07}.	
\end{remark}

\section{Remark on $K$-polystability relative to the extremal torus} \label{scrlkpst}

Let $\mathbb{T}_{\mathrm{max}}$ be a maximal torus in $\mathrm{Aut}_0 (X,L)$, and $\mathbb{T}_{\mathrm{ext}} \subset \mathbb{T}_{\mathrm{max}}$ be the extremal torus. Stoppa--Sz\'ekelyhidi \cite{StoSze} proved that $(X,L)$ admitting an extremal metric is $K$-semistable relative to $\mathbb{T}_{\mathrm{ext}}$ with respect to $\mathbb{T}_{\mathrm{max}}$-equivariant test configurations. It is natural to expect that this result can be improved to the $K$-polystability relative to $\mathbb{T}_{\mathrm{ext}}$, again with respect to $\mathbb{T}_{\mathrm{max}}$-equivariant test configurations. Indeed, this result was proved by Mabuchi \cite{Mab14}, extending an earlier one due to Stoppa--Sz\'ekelyhidi \cite{StoSze} who proved the $K$-stability relative to $\mathbb{T}_{\mathrm{max}}$, but an alternative proof by using the variational principle following \cite{BDL20,He19} also seems interesting (see also \cite[Theorem 1.2]{Der18} and \cite[Theorem 2]{Lah19}). It is likely well-known to the experts but does not seem to be explicitly stated in the literature, and we briefly comment on how it can be proved by adapting \cite[section 4]{BDL20}.

We follow the argument and the notation of Berman--Darvas--Lu \cite[section 4]{BDL20}. Most of their argument can be immediately generalised to the extremal case; for example, \cite[Lemmas 4.2 and 4.3]{BDL20} hold true for extremal cases as well. Thus the only part that needs to be generalised to the extremal setting is \cite[Lemma 4.1]{BDL20}. We write $G := \mathrm{Aut}_0 (X, v_{\mathrm{ext}})$ for the identity component of the automorphism group which commutes with $v_{\mathrm{ext}}$, whose Lie algebra is known to be the sum of abelian Lie algebra and a reductive Lie algebra by the result of Calabi \cite[Theorem 1]{Cal2} (see also \cite[(2.5)]{He19}).

\begin{lemma}
	Suppose that $(X, \omega)$ is a K\"ahler manifold. Let $u_0 \in \mathcal{H}_0$ be an extremal potential and $\{ u_t \}_{t \ge 0} \subset \CE^1 \cap E^{-1} (0)$ be a finite energy geodesic ray emanating from $u_0$ such that
	\begin{equation} \label{lmeqifgbdd}
		\inf_{g \in G} J_{\omega} (g \cdot u_t) < C
	\end{equation}
	for some constant $C>0$ and uniformly for all $t \ge 0$. Then there exists $v \in \mathrm{Isom} (X, \omega_{u_0} )$ which is Hamiltonian and commutes with $v_{\mathrm{ext}}$ such that
	\begin{equation*}
		u_t = \exp (tJv) \cdot u_0.
	\end{equation*}
\end{lemma}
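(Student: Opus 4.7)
The plan is to adapt the proof of \cite[Lemma 4.1]{BDL20} by replacing the Mabuchi energy $M$ with the modified Mabuchi energy $M_{\mathrm{ext}}$, the reference group $\mathrm{Aut}_0 (X, L)$ with $G = \mathrm{Aut}_0 (X, v_{\mathrm{ext}})$, and the cscK assumption on $u_0$ with the hypothesis that $u_0$ is extremal. The four key inputs needed by BDL's argument all survive this substitution. First, $M_{\mathrm{ext}}$ is convex along finite energy geodesics and $d_1$-lower semicontinuous, as recalled earlier in the paper. Second, $M_{\mathrm{ext}}$ is $G$-invariant: since $G$ commutes with $v_{\mathrm{ext}}$ by definition, the modified Futaki invariant vanishes on $\mathfrak{g}$, so $g^* M_{\mathrm{ext}} = M_{\mathrm{ext}}$ for every $g \in G$. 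Third, $u_0$ extremal is equivalent to $u_0$ being a minimizer of $M_{\mathrm{ext}}$, and the set of minimizers coincides with the $G$-orbit of $u_0$ by the extremal analogue of the Bando--Mabuchi theorem due to Berman--Berndtsson \cite{BB17}. Fourth, the rigidity part of \cite{BB17} asserts that if $M_{\mathrm{ext}}$ is constant along a finite energy geodesic ray emanating from an extremal potential, then the ray is generated by a one-parameter subgroup of $G$.

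With these ingredients in hand, the proof runs as follows. By convexity combined with the minimum property, $t \mapsto M_{\mathrm{ext}} (u_t)$ is convex and non-decreasing, so the slope $\ell := \lim_{t \to \infty} M_{\mathrm{ext}} (u_t)/t$ exists in $[0, +\infty]$. Using the hypothesis \eqref{lmeqifgbdd}, for each $t$ we pick $g_t \in G$ with $J_{\omega} (g_t \cdot u_t) < C$; by $G$-invariance, $M_{\mathrm{ext}} (g_t \cdot u_t) = M_{\mathrm{ext}} (u_t)$, and after a harmless constant shift the family $\{g_t \cdot u_t\}$ is $d_1$-bounded in $\CE^1$. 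Applying BDL's compactness machinery \cite[Lemmas 4.2 and 4.3]{BDL20}, which the paper already notes generalises verbatim to the extremal setting, together with the $d_1$-lower semicontinuity of $M_{\mathrm{ext}}$, one obtains a uniform upper bound on $M_{\mathrm{ext}} (u_t)$. Combined with convexity and monotonicity, this forces $M_{\mathrm{ext}} (u_t) \equiv M_{\mathrm{ext}} (u_0)$, so $M_{\mathrm{ext}}$ is constant along the ray. The rigidity statement then yields $u_t = \exp (tJv) \cdot u_0$ for some Hamiltonian Killing vector field $v$ on $(X, \omega_{u_0})$ commuting with $v_{\mathrm{ext}}$, which is exactly the desired conclusion.

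The main obstacle is the boundedness step: showing that $M_{\mathrm{ext}} (u_t)$ is bounded above under the reduced $J_{\omega}$-boundedness hypothesis. This is the technical heart of \cite[Lemma 4.1]{BDL20} and depends on Darvas' $L^1$-sequential compactness of $d_1$-bounded subsets of $\CE^1$ together with $d_1$-lower semicontinuity of the Mabuchi energy; the generalisation to the extremal case is clean precisely because all these ingredients, along with $G$-invariance of $M_{\mathrm{ext}}$, hold for the modified Mabuchi energy and the centraliser group $G$ without modification.
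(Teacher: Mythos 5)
Your proposal takes a genuinely different route (an energy-functional argument via constancy of $M_{\mathrm{ext}}$ plus a rigidity theorem), but it has a gap at its central step. From \eqref{lmeqifgbdd} you obtain a $d_1$-bounded family $\{g_t\cdot u_t\}$ with $M_{\mathrm{ext}}(g_t\cdot u_t)=M_{\mathrm{ext}}(u_t)$, and you then claim that $d_1$-lower semicontinuity of $M_{\mathrm{ext}}$ together with compactness yields a uniform \emph{upper} bound on $M_{\mathrm{ext}}(u_t)$. This inference is invalid: lower semicontinuity bounds the energy of a limit point from above by the liminf along the family, i.e.\ it works in the opposite direction, and $M_{\mathrm{ext}}$ (because of the entropy term) is certainly not bounded above on $d_1$-bounded subsets of $\mathcal{E}^1$. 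Likewise, the available compactness theorems in $\mathcal{E}^1$ take an entropy bound as a \emph{hypothesis}; they cannot produce one, and nothing in \eqref{lmeqifgbdd} controls the entropy along the ray. So the asserted constancy $M_{\mathrm{ext}}(u_t)\equiv M_{\mathrm{ext}}(u_0)$ is not established at this stage; it is only known a posteriori, once the conclusion of the lemma is in hand.

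Even granting constancy of $M_{\mathrm{ext}}$ along the ray, the ``rigidity part of \cite{BB17}'' you invoke is not available in the form you need: \cite{BB17} gives convexity and uniqueness of extremal metrics modulo automorphisms, but the statement that a finite-energy geodesic ray of minimizers emanating from $u_0$ is generated by a single one-parameter subgroup $\exp(tJv)$ with $v\in\mathfrak{isom}(X,\omega_{u_0},v_{\mathrm{ext}})$ requires at least the regularity theorem for weak minimizers of $M_{\mathrm{ext}}$ (the extremal analogue of the main theorem of \cite{BDL20}) together with an argument assembling the elements $g_t\in G$ with $u_t=g_t\cdot u_0$ into one subgroup depending linearly on $t$; this is essentially the content of the lemma itself, so citing it begs the question. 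By contrast, the proof in the paper makes no use of $M_{\mathrm{ext}}$ along the ray: the only modification of \cite[Lemma 4.1]{BDL20} is the Cartan decomposition step, where Calabi's theorem for extremal metrics gives $g_k=h_k\exp(-Jv_k)$ with $h_k\in\mathrm{Isom}(X,\omega_{u_0},v_{\mathrm{ext}})$ and $v_k\in\mathfrak{isom}(X,\omega_{u_0},v_{\mathrm{ext}})$, after which Berman--Darvas--Lu's $d_1$-geometric argument applies verbatim and directly produces the generator $v$ commuting with $v_{\mathrm{ext}}$. To salvage your route you would have to supply honest proofs of the two missing ingredients above; otherwise the metric-geometric argument is the correct path.
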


\begin{proof}
	Let $g_k \in G$ such that $J_{\omega} (g_k \cdot u_t) < C$. Then the theorem of Calabi \cite[Theorem 1]{Cal2} and the global Cartan decomposition (see e.g.~\cite[Proposition 6.2]{DR17}) implies that there exists $h_k \in \mathrm{Isom} (X, \omega_{u_0} , v_{\mathrm{ext}})$, an isometry commuting with the Killing vector field $v_{\mathrm{ext}}$ with respect to the extremal metric $\omega_{u_0}$, and $v_k \in \mathfrak{isom} (X, \omega_{u_0} , v_{\mathrm{ext}})$ such that $g_k = h_k \exp (-Jv_k)$. The rest of the argument is exactly as in \cite[Proof of Lemma 4.1]{BDL20}.
\end{proof}

The rest of the proof is exactly as in \cite[section 4]{BDL20}; note also that the right hand side of (\ref{lmeqifgbdd}) can be replaced by $C+ o(t)$, where $o(t)$ is a (non-negative) quantity satisfying $o(t)/t \to 0$ as $t \to + \infty$.

\bibliography{extremal.bib}

\end{document}